\documentclass[12pt,twoside]{article}

\usepackage{amsfonts}
\usepackage{amsmath}
\usepackage{graphicx}

\setcounter{page}{1}

\setlength{\textheight}{21.6cm}

\setlength{\textwidth}{14cm}

\setlength{\oddsidemargin}{1cm}

\setlength{\evensidemargin}{1cm}

\pagestyle{myheadings}

\thispagestyle{empty}

\markboth{\small{Faouzi Haddouchi}}{\small{On the output stabilizability 
 of the diffusion equation}}

\date{}

\begin{document}

\centerline{}

\centerline{}

\centerline {\Large{\bf On the output stabilizability of the diffusion equation}}

\centerline{}

\centerline{\bf {Faouzi Haddouchi}}

\centerline{}

\centerline{Department of Physics, University of Sciences and Technology}

\centerline{El M'naouar, BP 1505, Oran, 31000, Algeria}

\centerline{fhaddouchi@gmail.com}

\newtheorem{Theorem}{\quad Theorem}[section]
\newtheorem{Definition}[Theorem]{\quad Definition}
\newtheorem{Corollary}[Theorem]{\quad Corollary}
\newtheorem{Proposition}[Theorem]{Proposition}
\newtheorem{Lemma}[Theorem]{\quad Lemma}
\newtheorem{Example}[Theorem]{\quad Example}
\newenvironment{proof}[1][Proof]{\noindent\textbf{#1:} }

\begin{abstract}
This note is devoted to study the output stabilizability
of a simplified and a one-dimensional diffusion equation. Necessary and sufficient conditions for the system to be output
stabilizable will be given. These conditions are given in terms of the eigenvalues of
the infinitesimal generator and the Fourier coefficients of input and output
operators.

\end{abstract}

{\bf Mathematics Subject Classification:} 93D15, 93C25 \\

{\bf Keywords:} Infinite dimensional systems, controllability, state stabilizability,
output stabilizability, diffusion equation

\section{Introduction}

In this note, we consider the output stabilizability of the diffusion
equation on the interval $\left( 0,1\right) $:

\begin{equation}
\left\{
\begin{array}{c}
\frac{\partial z}{\partial t}=\frac{\partial ^{2}z}{\partial \xi ^{2}}%
-\alpha \frac{\partial z}{\partial \xi }+kz+b(\xi )u(t) \\
z(\xi ,0)=z_{0}(\xi ) \\
z(0,t)=z(1,t)=0%
\end{array}%
\right.
\end{equation}%
where $b\in L^{2}\left( 0,1\right) $, $\alpha >0$ and $k>0$.

With the output function given by
\begin{equation}
y(t)=\int_{_{0}}^{^{1}}\exp (-\alpha \xi )c(\xi )z(\xi ,t)d\xi.
\end{equation}%

Take $H=L^{2}\left( 0,1\right) $ to be the Hilbert space with the weighted
inner product%
\begin{equation}
\langle f,g\rangle =\int_{_{0}}^{^{1}}\exp (-\alpha \xi )f(\xi )g(\xi )d\xi .
\end{equation}%
The system (1),(2) can be rewritten in the abstract form with state space $H$%

\begin{equation}
\overset{.}{x}\left( t\right) =Ax(t)+{B}u(t),\text{ }x(0)=x_{_{0}}\
\end{equation}%
where $B=b$, $\ y(t)=\langle \ c,z(.,t)\rangle _{_{H}}=Cz(.,t)$.
\begin{equation}
A=A_{0}+kI,\text{ and \ }A_{0}h=\frac{d^{2}h}{d\xi ^{2}}-\alpha \frac{dh}{%
d\xi }
\end{equation}%
for $h$ in the domain of $A_{0}$ given by%
\begin{equation}
D\left( A_{0}\right) =\left\{
\begin{array}{c}
h:\text{ }h,\frac{dh}{d\xi }\text{{\ are \ absolutely \ continuous }} \\
\text{and\ }\frac{d^{2}h}{d\xi ^{2}}\in H,\text{ }h\left( 0\right) =h\left(
1\right) =0%
\end{array}%
\right\}
\end{equation}%
It is no hard to show that $A$ is self-adjoint with eigenvalues $\lambda
_{n}=-\frac{\alpha ^{2}}{4}-n^{2}\pi ^{2}+k$ and normalized eigenvectors
${\small \phi }_{n}{\small (\xi )=}\sqrt{2}\exp {\small (\alpha \xi /2)}\sin
{\small (n\pi \xi )}$, $n\in \mathbb{N}$, which form an orthonormal basis
for $L^{2}\left( 0,1\right) $.

A focus of this paper is to give a criterion for the output stabilization by
a linear bounded feedback $u=Fx$, $F\in L(H,\mathbb{R})$. The motivation for
considering this class of systems is given by the work of
[2], that gave a result on state stabilizability for a class of
distributed parameter systems.

The paper is structured as follows. In second section, we shall review some
well-known concepts of approximate controllability, state and output
stabilizability for infinite dimensional systems defined in Hilbert spaces.

The third section deals with controllability and stabilization for the class
of systems studied here. A fully explicit description of the controllable and
uncontrollable subspaces for this class of systems is given in section 3. We also give
a criterion for output stabilizability. Finally, we shall conclude the paper
with some examples.

\section{Preliminary Notes}

In the beginning of this section let us recall some definitions.
Consider the abstract system (S) with the state given by
\begin{equation}
\overset{.}{x}\left( t\right) =Ax(t)+Bu(t),\text{ }x(0)=x_{_{0}}\
\end{equation}%
and the output given by
\begin{equation}
y(t)=Cx(t)
\end{equation}

with the following hypothesis:

$(i)$ $x(t)\in H$ (the state space), $u(t)\in U$ (the input space) and $%
y(t)\in Y$ (the output space),\ where $H$, $U$ and $Y$ are always intended
infinite dimensional Hilbert spaces unless otherwise stated;

$(ii)$ $B$ and $C$ are linear and continuous operators,\ i.e., $B\in L(U,H)$%
, $C\in L(H,Y)$;

$(iii)$ The operator $A\ $is an infinitesimal generator of a C$%
_{_{0}}$-semigroup$\ S(t)$ on the state space $H$. As usual $u$, $x$, $y$
represent respectively the input, state and output of the system $(7)$ and $%
(8)$.

\begin{Definition}

The system $(7)$ (or the pair$\ (A$,$B)$) is approximately controllable
if $\ \mathit{N}=\left\{ 0\right\}$.

Where $\mathit{N}=\underset{{\small t\geq 0}}{\bigcap }\ker B^{\ast }S^{\ast
}\left( t\right) \ $.

$ \mathit{L}=\mathit{N}^{\perp }$ and$\ \mathit{N\ }$are called, the controllable and
uncontrollable subspaces of the system$\ (7)$, respectively.

\end{Definition}

Following [6], we can decompose the state space$\ H\mathit{\ }$as%
$\ \mathit{L}\oplus \mathit{N\ }$and then $A$, $B\ $and $C$ are represented
by the operators matrix%

\begin{equation}
A=%
\begin{pmatrix}
A_{_{11}} & 0 \\
0 & A_{_{22}}%
\end{pmatrix}%
,B=%
\begin{pmatrix}
B_{_{1}} \\
0%
\end{pmatrix}%
,C=%
\begin{bmatrix}
C_{_{1}} & C_{_{2}}%
\end{bmatrix}%
.
\end{equation}

Using these operators, we arrive at the split case:%
\begin{equation}
\left\{
\begin{array}{l}
\overset{.}{x_{_{1}}}=A_{_{11}}x_{_{1}}+B_{_{1}}u\quad \\
\overset{.}{x_{_{2}}}=A_{_{22}}x_{_{2}}\quad \quad \quad \quad \\
y=y_{_{1}}+y_{_{2}}\quad%
\end{array}%
\right.
\end{equation}

\-where $y_{_{i}}=C_{_{i}}x_{_{i}}$ , for $i=1,2$.

\begin{Definition}
The pair$\ (A,B)$ is called (exponentially )
stabilizable if there is an $\ F\in L(H,U)$ such that the semigroup $%
S_{_{A+BF}}\left( t\right) $ is (exponentially) asymptotically stable.

Where $%
S_{_{A+BF}}\left( t\right) $ is the semigroup generated by $A+BF$.
\end{Definition}

It follows immediately that if the control is given by the feedback
$u=Fx$, for all $x_{0}\in H$ there exists positive $M$ and $\omega $ such
that

\begin{center}
$\left\Vert x\left( t\right) \right\Vert \leq M\exp (-\omega t)\ \left\Vert
x_{_{0}}\right\Vert $
\end{center}

and therefore $x\left( t\right) \rightarrow 0$, if $t\rightarrow \infty $.

\begin{Definition}

The system $(7),(8)$ is output stabilizable by a bounded feedback if there
is an $F\in L(H,U)$ such that the output $y(t)$ of the closed system
\begin{equation}
\overset{.}{x}\left( t\right) =(A+BF)x(t),\text{ }x(0)=x_{_{0}}
\end{equation}

is exponentially stable, i.e., $y(t)$ converges to zero when $t\rightarrow
\infty $ , for every $x_{0}\in H$.
\end{Definition}
See e.g.,[1],[5], [6].

\section{Main Results}

Under assumption about our system operator $A$, $A$ and $S\left( t\right) $
have the spectral decompositions

\begin{equation}
{ Ax=}\overset{\infty }{\underset{_{n=1}}{\sum }}{\lambda }_{n}%
{.E(\lambda }_{n}{)x}\text{ }\ \text{\ for}\ { x\in D(A)}
\end{equation}
\begin{equation}
{{ S(t)=}\overset{\infty }{\underset{_{n=1}}{\sum }}\exp {
(\lambda }_{n}{ t)E(\lambda }_{n}{ )}}
\end{equation}
where $E(\lambda _{_{n}})$ are the spectral projections associated with the
eigenvalues $\lambda _{_{n}}$ of $A$ and are given by

\begin{equation}
{E(\lambda }_{n}{ )=}\left\langle .\ , \phi _{n}\right\rangle
{ \phi }_{n}.
\end{equation}
Furthermore,$\ x\in H$ also has the decomposition
\begin{equation}
{x=}\overset{\infty }{\underset{_{n=1}}{\sum }}{ E(\lambda }_{n}%
{ )x}.
\end{equation}

\begin{Proposition}
The system $(4)$ (or the pair $(A,b)$) is (exponentially) stabilizable if
and only if the operator $\ A_{_{22}}$ is (exponentially) stable.
\end{Proposition}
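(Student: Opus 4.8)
The plan is to exploit the block-diagonal decomposition (10) of the system with respect to $H = \mathit{L} \oplus \mathit{N}$, combined with the key fact — which follows from the explicit spectral data in the excerpt — that the uncontrollable subspace $\mathit{N}$ is a (closed) spectral subspace of $A$ spanned by some subfamily $\{\phi_n : n \in I\}$ of eigenvectors. Consequently $A_{22}$, the restriction of $A$ to $\mathit{N}$, is itself self-adjoint with discrete spectrum $\{\lambda_n : n \in I\}$, and since the $\lambda_n \to -\infty$, $A_{22}$ generates an analytic (in fact uniformly bounded) $C_0$-semigroup whose growth bound equals $\sup_{n\in I}\lambda_n$. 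The point is that stability of $A_{22}$ is then equivalent to $\sup_{n\in I}\lambda_n < 0$ (exponential case) or $\sup_{n\in I}\lambda_n \le 0$ with no $\lambda_n = 0$ (the plain-stability case collapses to the same thing here because the spectrum is discrete and bounded above).

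For the forward direction ($\Rightarrow$): suppose $(A,b)$ is stabilizable, so there is $F \in L(H,\mathbb{R})$ with $S_{A+BF}(t)$ (exponentially) stable. Because $B = \binom{B_1}{0}$ in the decomposition (10), the feedback perturbation acts only on the $\mathit{L}$-component, so the $x_2$-dynamics $\dot{x}_2 = A_{22}x_2$ is \emph{unaffected} by any feedback: the closed-loop generator is $\begin{pmatrix} A_{11}+B_1F_1 & B_1F_2 \\ 0 & A_{22}\end{pmatrix}$, which is block upper-triangular with $A_{22}$ in the lower-right corner. I would argue that an upper-triangular $C_0$-semigroup of this form is (exponentially) stable only if each diagonal block is, hence $A_{22}$ must be (exponentially) stable. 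For the reverse direction ($\Leftarrow$): assume $A_{22}$ is (exponentially) stable. Then I must produce an $F$ stabilizing $A_{11}+B_1F_1$ on $\mathit{L}$. Here I would invoke that $(A_{11}, B_1)$ is approximately controllable by construction of $\mathit{L}$, and — using the spectral structure again — that $A_{11}$ has only finitely many eigenvalues $\lambda_n \ge 0$ (since $\lambda_n = -\alpha^2/4 - n^2\pi^2 + k \to -\infty$), so only a finite-dimensional unstable part needs to be moved; a standard pole-shifting / finite-rank feedback argument on that unstable spectral subspace, leaving the (already stable) remainder untouched, yields the desired $F_1$. Taking $F = [F_1 \ \ 0]$ (extended by zero on $\mathit{N}$) then makes the full closed-loop block-triangular generator stable, since both diagonal blocks are.

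The main obstacle I anticipate is the $\Leftarrow$ direction — specifically, verifying that approximate controllability of $(A_{11},B_1)$ together with the finiteness of the unstable spectrum genuinely delivers a \emph{bounded} stabilizing feedback, and that combining $F_1$ with the zero block on $\mathit{N}$ does not destroy stability (one needs that a block-triangular perturbation with both diagonal blocks generating stable semigroups still generates a stable semigroup — true here because the off-diagonal coupling $B_1F_2$ is bounded and the unstable obstruction is finite-dimensional, but it deserves a careful word via the variation-of-constants formula). The $\Rightarrow$ direction is comparatively routine once one observes the feedback cannot touch the $x_2$-equation. I would also remark that for \emph{this} concrete $A$, since the spectrum is real, discrete, and bounded above with $-\infty$ the only accumulation point, "exponentially stable" and "asymptotically stable" coincide for $A_{22}$, so the parenthetical qualifier in the statement is consistent throughout.
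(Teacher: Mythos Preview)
Your proposal is correct and follows essentially the same route as the paper: note that $(A_{11},B_1)$ is approximately controllable by construction, deduce its (exponential) stabilizability from the spectral structure, and combine this with the block decomposition to obtain the equivalence with stability of $A_{22}$. The paper compresses your argument into two citations --- Triggiani~[5] for stabilizability of the controllable part and Rabah--Ionescu~[6] for the block-decomposition equivalence --- whereas you spell out the underlying finite-unstable-spectrum pole-shifting and triangular-semigroup reasoning that those references supply.
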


\begin{proof}
Since $(A_{_{11}},B_{_{1}})$ is approximately controllable by construction.
Then, by [5] it follows that the pair \ $(A_{_{11}},B_{_{1}})$
is exponentially stabilizable. From [6] we can get
directly the desired result.
\end{proof}
Before we shall prove our main result, we need some technical lemmas.
\begin{Lemma}
The uncontrollable subspace $\mathit{N\ }$of the system (4) is of the
following form%
\begin{equation}
{ N=}\overline{span}\left\{ \phi _{n}\text{, }n\in J\subset \mathbb{N}%
\text{ }/\text{ \ }B^{\ast }\phi _{n}{}_{\ }=0\right\}
\end{equation}%
where $B^{\ast }=\langle $ $b,$ $.\rangle _{_{H}}$ and  $\overline{span}%
\left\{ e_{n}\text{, }n\in I\right\} $ denotes the closed subspace generated
by the vectors $e_{n}\text{, }n\in I$.
\end{Lemma}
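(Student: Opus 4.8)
The plan is to compute the set $\mathit{N}=\bigcap_{t\ge 0}\ker B^{\ast}S^{\ast}(t)$ directly by exploiting the spectral decomposition of $A$ (and hence of $S(t)$) given in $(14)$–$(15)$, together with the fact that $A$ is self-adjoint, so that $S^{\ast}(t)=S(t)=\sum_{n\ge 1}\exp(\lambda_n t)E(\lambda_n)$ and $B^{\ast}=\langle b,\,\cdot\,\rangle_{H}$. First I would fix $x\in H$ and write $x=\sum_{n\ge 1}\langle x,\phi_n\rangle\phi_n$. Applying $S^{\ast}(t)$ and then $B^{\ast}$ gives the scalar function
\[
B^{\ast}S^{\ast}(t)x=\sum_{n=1}^{\infty}\exp(\lambda_n t)\,\langle x,\phi_n\rangle\,\langle b,\phi_n\rangle_{H}
=\sum_{n=1}^{\infty}\exp(\lambda_n t)\,\langle x,\phi_n\rangle\,(B^{\ast}\phi_n).
\]
So $x\in\mathit{N}$ iff this function of $t$ vanishes identically on $[0,\infty)$.

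The key analytic step is then: since the eigenvalues $\lambda_n=-\alpha^2/4-n^2\pi^2+k$ are real, distinct, and form a strictly decreasing sequence, the exponentials $\{\exp(\lambda_n t)\}_{n\ge 1}$ are linearly independent as functions on $[0,\infty)$ — in fact one can argue more strongly that a convergent series $\sum_n c_n\exp(\lambda_n t)$ vanishing on an interval forces every $c_n=0$ (e.g. by repeatedly multiplying by $\exp(-\lambda_1 t)$ and taking $t\to\infty$, or by a Müntz-type / uniqueness argument for Dirichlet series). Hence $B^{\ast}S^{\ast}(t)x\equiv 0$ forces $\langle x,\phi_n\rangle\,(B^{\ast}\phi_n)=0$ for every $n$, i.e. $\langle x,\phi_n\rangle=0$ for every $n$ with $B^{\ast}\phi_n\ne 0$. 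Setting $J=\{n\in\mathbb{N}:B^{\ast}\phi_n=0\}$, this says exactly that $x$ lies in the closed span of $\{\phi_n:n\in J\}$; conversely, any such $x$ clearly kills every term of the series, so it lies in $\mathit{N}$. This establishes the claimed equality $\mathit{N}=\overline{\mathrm{span}}\{\phi_n: n\in J\}$, and I would also note that $B^{\ast}\phi_n=\langle b,\phi_n\rangle_{H}=\sqrt{2}\int_0^1 b(\xi)\sin(n\pi\xi)\,d\xi$ is just (a multiple of) the $n$-th Fourier sine coefficient of $b$, which connects with the abstract/"Fourier coefficient" language promised in the abstract.

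I expect the main obstacle to be the justification that the vanishing of the series $\sum_n \exp(\lambda_n t)\,c_n$ on $[0,\infty)$ implies each coefficient $c_n=0$: one must be slightly careful because this is an infinite series, not a finite linear combination, so a bare appeal to linear independence of exponentials is not quite enough. The clean way around this is to use that $c_n=\langle x,\phi_n\rangle(B^{\ast}\phi_n)$ satisfies $\sum_n |\langle x,\phi_n\rangle|^2<\infty$ and $(B^{\ast}\phi_n)$ is bounded (since $b\in H$ and the $\phi_n$ are orthonormal, $\sum_n|B^{\ast}\phi_n|^2=\|b\|^2<\infty$), so the series converges absolutely and uniformly for $t\ge 0$; then the Dirichlet-series uniqueness theorem (coefficients of a Dirichlet series are uniquely determined) applies and forces $c_n=0$ for all $n$. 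Everything else — the self-adjointness giving $S^{\ast}=S$, the identification of $B^{\ast}$, and the "conversely" direction — is routine, so the write-up should be short.
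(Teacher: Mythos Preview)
Your proof is correct and in fact more self-contained than the paper's. The paper first invokes the structure theory of closed $S(t)$-invariant subspaces for spectral operators (citing Zwart [3],[4]) to decompose $N$ as $\sum_{n\in J}E(\lambda_n)N$ with $E(\lambda_n)N\subset N$, and only then, working on a single spectral piece $E(\lambda_{n_0})N$, extracts the vanishing of the individual coefficient $\langle b,\phi_{n_0}\rangle$ by appealing to [7]. You bypass this detour: you write down the same series $\sum_n \exp(\lambda_n t)\langle x,\phi_n\rangle\langle b,\phi_n\rangle$ and kill all coefficients at once via a Dirichlet-series uniqueness argument, carefully justified through the $\ell^1$-summability of the coefficients (Cauchy--Schwarz on $(\langle x,\phi_n\rangle)_n$ and $(\langle b,\phi_n\rangle)_n$) and the strict monotonicity of the $\lambda_n$. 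The paper's route has the advantage of generalizing more readily to operators with eigenvalues of higher multiplicity or to multi-input $B$, where the invariant-subspace machinery is genuinely needed; your route is shorter and requires no external structural lemmas in this rank-one, simple-spectrum setting. One minor slip in your closing remark (which does not affect the proof): since the inner product carries the weight $e^{-\alpha\xi}$ and $\phi_n(\xi)=\sqrt{2}\,e^{\alpha\xi/2}\sin(n\pi\xi)$, one actually gets $B^{\ast}\phi_n=\sqrt{2}\int_0^1 e^{-\alpha\xi/2}\,b(\xi)\sin(n\pi\xi)\,d\xi$, i.e.\ the sine coefficient of $e^{-\alpha\xi/2}b(\xi)$ rather than of $b(\xi)$ itself.
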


\begin{proof}
By the definition of$\ \mathit{N\ }$ and according to [4], this subspace is closed and is invariant
for $S^{\ast }\left( t\right) =S\left( t\right) $. Then by the proof of
theorem IV.6 in [3]$,$ $\mathit{N\ }$ is of the following form
\begin{center}
$\ \mathit{N}=\underset{_{n\in J}}{\sum }E(\lambda _{_{n}})\mathit{N}$ \ and
$\ E(\lambda _{_{n}})\mathit{N}\subset \mathit{N}$ \ for all $\ n\ $ in $J$
\end{center}

where\ \ $J=\left\{ n\ /\ E(\lambda _{_{n}})\ \mathit{N}\neq \left\{
0\right\} \right\}. $ We have $B^{\ast }S^{\ast }\left( t\right) x=0$ \ if
and only if \ for all \ $t\geq 0$

\begin{center}
$\overset{\infty }{\underset{_{n=1}}{\sum }}\exp (\lambda _{_{n}}t)\langle
x,\ \phi _{_{n}}\rangle $ $\langle \ b,\phi _{_{n}}\rangle =0$
\end{center}

First let $x\in E(\lambda _{_{n_{0}}})\mathit{N}$, $x\neq 0$, for a certain $ n_{_{0}}\in J$.
Then, since $E(\lambda _{_{n_{0}}})\mathit{N}$ $\subset  \mathit{N}$, it follows from [7] that

\begin{equation}
\langle x,\ \phi _{_{n_{0}}}\rangle \langle \ b,\phi _{_{_{n_{0}}}}\rangle =0
\end{equation}

Rewriting equation (17) gives

\begin{center}
$B^{\ast }\phi _{_{n_{0}}}=0.$
\end{center}

This shows that

\begin{center}
$\mathit{N}$ $\subset \overline{span}\left\{ \phi _{n}\text{, }n\in
J\subset \mathbb{N}\text{ }/\text{ }\langle \ b,\phi _{_{n}}\rangle
_{H}{}_{\ }=0\right\} $
\end{center}

Now it remains to verify that $\phi _{_{n}}\in \mathit{N}$, where $\langle \
b,\phi _{_{n}}\rangle _{H}{}_{\ }=0\ \ $for $n\in J$. But the proof of this
part is easy and will be omitted here.
\end{proof}

Using the precise description of $\mathit{N\ }$ and the fact that $\ \mathit{%
L}=\mathit{N}^{\perp }$ one can immediately get.

\begin{Lemma}

The controllable subspace $\mathit{L}$ of the system (4) is given by
\begin{equation}
\mathit{L}=\overline{span}\left\{ \phi _{n}\text{ }/\text{ \ }\langle \
b,\phi _{_{n}}\rangle _{_{H}\ }\neq 0\right\}.
\end{equation}

\end{Lemma}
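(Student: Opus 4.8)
The plan is to read off $\mathit{L}$ as the orthogonal complement of the set $\mathit{N}$ already computed in Lemma 3.3, using that the eigenvectors $\phi_n$ form an orthonormal basis of $H$. By Lemma 3.3 we have $\mathit{N}=\overline{\mathrm{span}}\{\phi_n : n\in J\}$, where $J=\{n\in\mathbb{N} : \langle b,\phi_n\rangle_H=0\}$. First I would partition $\mathbb{N}$ into the disjoint sets $J$ and $J^c=\{n\in\mathbb{N} : \langle b,\phi_n\rangle_H\neq 0\}$. Then I would invoke the elementary fact that for an orthonormal basis $\{\phi_n\}_{n\in\mathbb{N}}$ of a Hilbert space $H$ and any index subset $J$, the orthogonal complement of $\overline{\mathrm{span}}\{\phi_n : n\in J\}$ is exactly $\overline{\mathrm{span}}\{\phi_n : n\in J^c\}$.

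Concretely, I would argue both inclusions. If $x\perp\phi_n$ for every $n\in J$, then in the expansion $x=\sum_{n\in\mathbb{N}}\langle x,\phi_n\rangle\phi_n$ all coefficients indexed by $J$ vanish, so $x=\sum_{n\in J^c}\langle x,\phi_n\rangle\phi_n\in\overline{\mathrm{span}}\{\phi_n : n\in J^c\}$; conversely every such sum is manifestly orthogonal to each $\phi_m$ with $m\in J$ by orthonormality, and hence to all of $\overline{\mathrm{span}}\{\phi_n : n\in J\}$. Combining this with $\mathit{L}=\mathit{N}^{\perp}$ (Definition 2.1) and the description of $J^c$ gives
\[
\mathit{L}=\mathit{N}^{\perp}=\overline{\mathrm{span}}\{\phi_n : n\in J^c\}=\overline{\mathrm{span}}\{\phi_n : \langle b,\phi_n\rangle_H\neq 0\},
\]
which is the claimed formula.

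There is essentially no obstacle here: the content is entirely in Lemma 3.3, and what remains is the standard computation of the orthogonal complement of a coordinate subspace with respect to an orthonormal basis. The only point requiring a word of care is that $\overline{\mathrm{span}}\{\phi_n : n\in J\}$ is taken to be closed (as stated in Lemma 3.3), so that taking the complement twice returns the set itself and no further closure issues arise; this is automatic since the orthogonal complement of any set is closed and $(\mathit{N}^{\perp})^{\perp}=\overline{\mathit{N}}=\mathit{N}$.
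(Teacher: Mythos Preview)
Your argument is correct and follows exactly the route the paper indicates: take the description of $\mathit{N}$ from the preceding lemma, use $\mathit{L}=\mathit{N}^{\perp}$, and compute the orthogonal complement via the orthonormal basis $\{\phi_n\}$. The paper itself omits all details here, so your explicit verification of both inclusions is more than what the paper provides; just note that the lemma you cite for $\mathit{N}$ is numbered 3.2 in the paper (and the present statement is Lemma 3.3), not 3.3 and 3.4.
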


As a main result of this paper we establish the following proposition:

\begin{Proposition}

The system $(4)$ is output stabilizable if and only if

\begin{equation}
\lambda _{_{n}}<0\ \ \text{for all }\ n\text{ in }\ K,
\end{equation}

where \ \ $K=\left\{ \ n\ /\ \ \langle \ c,\phi _{_{n}}\rangle \neq 0\ \text{%
and }\langle \ b,\phi _{_{n}}\rangle =0\ \right\} $.
\end{Proposition}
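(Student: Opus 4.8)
The plan is to exploit the block-diagonal decomposition $H=\mathit{L}\oplus\mathit{N}$ given in (9)--(10), where $\mathit{L}=\overline{\mathrm{span}}\{\phi_n:\langle b,\phi_n\rangle\neq 0\}$ and $\mathit{N}=\overline{\mathrm{span}}\{\phi_n:\langle b,\phi_n\rangle=0\}$ by Lemmas 3.2 and 3.3. Under any bounded feedback $u=Fx$ the output splits as $y=y_1+y_2$ with $y_1=C_1x_1$, $y_2=C_2x_2$, and crucially $x_2(t)=S_{A_{22}}(t)x_{2,0}$ is unaffected by the feedback since $\mathit{N}$ is $S(t)$-invariant and $B$ maps into $\mathit{L}$. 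The key observation is that $C_2$ acts only through the coefficients $\langle c,\phi_n\rangle$ for $n$ with $\langle b,\phi_n\rangle=0$, i.e.\ precisely the indices in $K$ together with those $n$ where both inner products vanish (which contribute nothing to $y_2$). So the uncontrollable part of the output is governed exactly by the eigenvalues $\lambda_n$, $n\in K$.

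For sufficiency, assume $\lambda_n<0$ for all $n\in K$. First, choose $F$ supported on $\mathit{L}$ so that $A_{11}+B_1F_1$ is exponentially stable; this is possible because $(A_{11},B_1)$ is approximately controllable by construction and hence exponentially stabilizable by the result cited in Proposition 3.1 (the reference [5]). Then $y_1(t)=C_1x_1(t)\to 0$ exponentially since $C_1$ is bounded and $\|x_1(t)\|\le M e^{-\omega t}\|x_{1,0}\|$. For $y_2$, write $y_2(t)=\sum_{n\in K}e^{\lambda_n t}\langle x_{2,0},\phi_n\rangle\langle c,\phi_n\rangle$; since each $\lambda_n<0$ and in fact $\lambda_n=-\alpha^2/4-n^2\pi^2+k\to-\infty$, only finitely many $n$ have $\lambda_n$ close to $0$, and a Cauchy--Schwarz / dominated-convergence estimate on this series (using $x_{2,0}\in\ell^2$) shows $y_2(t)\to 0$, indeed exponentially with rate $\max_{n\in K}\lambda_n<0$. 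Hence $y=y_1+y_2\to 0$ for every $x_0$, so the system is output stabilizable.

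For necessity, suppose some $n_0\in K$ has $\lambda_{n_0}\ge 0$. Take $x_0=\phi_{n_0}\in\mathit{N}$. For any bounded feedback $F$, the component $x_2(t)$ evolves under $S_{A_{22}}(t)$ independently of $F$, and since $\phi_{n_0}$ is an eigenvector, $x_2(t)=e^{\lambda_{n_0}t}\phi_{n_0}$, while $x_1(t)\equiv 0$. Then $y(t)=\langle c,\phi_{n_0}\rangle e^{\lambda_{n_0}t}$, which does not converge to $0$ as $t\to\infty$ because $\langle c,\phi_{n_0}\rangle\neq 0$ and $\lambda_{n_0}\ge 0$. This contradicts output stabilizability, proving the condition is necessary.

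The main obstacle I anticipate is making the convergence of the series for $y_2(t)$ rigorous: one must argue that even though there are infinitely many terms, the uniform bound $\lambda_n\le\max_{n\in K}\lambda_n<0$ on $K$ (available because $\lambda_n\to-\infty$, so the supremum over $K$ is attained and negative) together with $\sum_n|\langle x_{2,0},\phi_n\rangle|^2<\infty$ and $|\langle c,\phi_n\rangle|\le\|c\|$ gives $|y_2(t)|\le \|c\|\,\|x_{2,0}\|\,e^{t\max_{n\in K}\lambda_n}$. A secondary subtlety is verifying that the feedback chosen on $\mathit{L}$ genuinely leaves the $\mathit{N}$-dynamics untouched, i.e.\ that the closed-loop generator remains block-triangular (in fact block-diagonal) with respect to $\mathit{L}\oplus\mathit{N}$; this follows from $\mathrm{Range}(B)\subseteq\mathit{L}$ and the $A$-invariance of both subspaces, but it should be stated explicitly.
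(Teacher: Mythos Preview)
Your overall strategy---split $H=\mathit{L}\oplus\mathit{N}$, stabilize the controllable block, and reduce the question to the behaviour of $y_2$---matches the paper's. Your sufficiency argument is correct and in fact more explicit than the paper's, which simply cites [5] and [7] for the exponential decay of the series $\sum_{n\in K}e^{\lambda_n t}\langle x_0,\phi_n\rangle\langle c,\phi_n\rangle$; your Cauchy--Schwarz bound with $\max_{n\in K}\lambda_n<0$ is the right way to make that rigorous. The paper also inserts a second decomposition $\mathit{N}=M\oplus W$ into unobservable and observable parts of $(C_2,A_{22})$, but your direct observation that only indices in $K$ contribute to $y_2$ accomplishes the same thing without the extra notation.

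There is, however, a genuine gap in your necessity argument. You claim that for $x_0=\phi_{n_0}\in\mathit{N}$ and \emph{any} bounded feedback $F$ one has $x_1(t)\equiv 0$. This is false: writing $F=[F_1\ F_2]$ with respect to $\mathit{L}\oplus\mathit{N}$, the closed-loop generator is
\[
A+BF=\begin{pmatrix}A_{11}+B_1F_1 & B_1F_2\\ 0 & A_{22}\end{pmatrix},
\]
which is block \emph{upper-triangular}, not block-diagonal. So while $x_2(t)=e^{\lambda_{n_0}t}\phi_{n_0}$ is indeed independent of $F$, the $x_1$-component satisfies $\dot{x}_1=(A_{11}+B_1F_1)x_1+B_1F_2x_2(t)$ and is generally nonzero whenever $F_2\phi_{n_0}\neq 0$. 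Your conclusion $y(t)=\langle c,\phi_{n_0}\rangle e^{\lambda_{n_0}t}$ therefore does not follow, and one must rule out the possibility that $y_1(t)=C_1x_1(t)$ cancels the non-decaying term in $y_2(t)$. The paper does not attempt this directly: it first invokes the result of Rabah--Ionescu [6] (already used in the proof of Proposition~3.1) to assert that $y$ is exponentially stabilizable if and only if $y_2$ is exponentially stable, and only \emph{after} this reduction is necessity ``straightforward'' (take $x_{2,0}=\phi_{n_0}$ in the autonomous subsystem~(20) and read off $y_2(t)=\langle c,\phi_{n_0}\rangle e^{\lambda_{n_0}t}$, where no feedback is present). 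To repair your argument you must either supply that reduction yourself or give an independent reason why no choice of $F_2$ can produce the cancellation.
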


\begin{proof}
From [6] we have that $A_{_{ii}}$ is the infinitesimal generator of a C$_{_{0}}$-semigroup $S_{_{i}}\left( t\right) $
on $H_{_{i}}$, for $i=1,2$. $H_{_{1}}=\mathit{L}, \ H_{_{2}}=\mathit{N}$.

Furthermore, it follows that with respect to the spectral decomposition of $%
A $ we have

\begin{center}
$S_{_{1}}(t)=\underset{_{n\in I}}{\sum }\exp (\lambda _{_{n}}t)E(\lambda
_{_{n}})\ ,\ $\ $S_{_{_{2}}}(t)=\underset{_{n\in \mathbb{N}-I}}{\sum }\exp
(\lambda _{_{n}}t)E(\lambda _{_{n}})$
\end{center}

where\ \ $I=\{n\ /\ \langle \ b,\phi _{_{n}}\rangle \neq 0\}$.

According to the proof of proposition 3.1, it follows that the output $y$ of
the system $(4)$ is exponentially stabilizable if and only if the output $%
y_{_{2}}$ is exponentially stable.

In order to study the stability of the output $y_{_{2}}\left( t\right)
=C_{_{2}}x_{_{2}}\left( t\right) $ on $\mathit{N}$, \ we again consider the
subsystem

\begin{equation}
\left\{
\begin{array}{l}
\overset{.}{x_{_{2}}}=A_{_{22}}x_{_{2}},\text{ \ }x_{_{2}}(0)=x_{_{02}} \\
y_{_{2}}=C_{_{2}}x_{_{2}}\quad%
\end{array}%
\right. \
\end{equation}

where $x(0)=x_{0}=\left[
\begin{array}{c}
x_{_{01}} \\
x_{_{02}}%
\end{array}%
\right] \in \mathit{L}\oplus \mathit{N\ .}$

The output $\ y_{_{2}}\left( t\right) =C_{_{2}}S_{_{2}}\left( t\right)
x_{_{02}}$ of the subsystem (20) is given by%
\begin{equation}
\ y_{_{2}}\left( t\right) \ =\underset{_{n\in \mathbb{N}-I}}{\sum }\exp
(\lambda _{_{n}}t)\langle x_{_{0}},\ \phi _{_{n}}\rangle \langle \ c,\phi
_{_{n}}\rangle
\end{equation}%
$\ $

Using a similar argument as above one can decompose the state space $\mathit{%
N}$ of the subsystem $(20)$ as $M\oplus W$, where $M=\underset{{\small %
t\geq 0}}{\bigcap }\ker C_{_{2}}S_{_{2}}\left( t\right) $ is the unobservable subspace of the pair $(C_{_{2}},A_{_{22}})$ and $W=M^{\bot \text{ }}$is the observable subspace of the subsystem $(20)$.

The operators$\ $\ $A_{_{22}}$ , $C_{_{2}}$ may be written in the form%
\begin{equation}
\ A_{_{_{22}}}=\left(
\begin{array}{cc}
A_{_{22}}^{^{1}} & 0 \\
0 & A_{_{22}}^{^{2}}%
\end{array}%
\right) ,C_{_{2}}=\left[
\begin{array}{cc}
0 & C_{_{2}}^{^{2}}%
\end{array}%
\right]
\end{equation}

Subsystem $(20)\ $can then be written as:
\begin{equation}
\left\{
\begin{array}{l}
\overset{.}{x}_{2}^{1}=A_{22}^{1}x_{2}^{1} \\
\overset{.}{x}_{2}^{2}=A_{22}^{2}x_{2}^{2}\ \qquad \\
\text{\thinspace }y_{2}\text{ }=C_{2}^{2}x_{2}^{2}\qquad%
\end{array}%
\right.
\end{equation}%
where $x_{_{02}}=\left[
\begin{array}{c}
x_{02}^{1} \\
x_{02}^{2}%
\end{array}%
\right] \in M\oplus W\mathit{\ }$.

The stability of the output $y_{_{2}}$on $\mathit{N}$ can then be analyzed
by studying it on the observable subspace $W$ of the subsystem $(20)$. A
similar argument as that used above can be used to show that the observable
subspace of the pair $(C_{_{2}},A_{_{22}})$ is given by%
\begin{equation}
W=\overline{span}\left\{ { \phi }_{_{n}}\text{ }/\text{ \ }\langle \
c,\phi _{_{n}}\rangle \neq 0\right\}
\end{equation}

and the output $y_{_{2}}\left( t\right)
=C_{_{2}}^{^{2}}S_{_{2}}^{^{2}}\left( t\right) x_{_{02}}^{^{2}}$ of the
subsystem $(20)$ is given by%
\begin{equation}
y_{_{2}}\left( t\right) =\underset{_{n\in K}}{\sum }\exp (\lambda
_{_{n}}t)\langle x_{_{0}},\ \phi _{_{n}}\rangle \langle \ c,\phi
_{_{n}}\rangle \ \ \ \ \ \
\end{equation}

where $K=\left\{ \ n\ /\ \ \langle \ c,\phi _{_{n}}\rangle \neq 0\ \text{and
}\langle \ b,\phi _{_{n}}\rangle =0\ \right\} ,S_{_{2}}^{^{\ i}}\left(
t\right) $ being the semigroup generated by $A_{_{22}}^{^{i}}$ for $i=1,2$.

The necessary condition is straightforward. So we concentrate on the Sufficiency. From [7] and [5] it follows
that if $\lambda _{_{n}}<0$ for all $n$ in $K$, then the output $y_{_{2}}\left( t\right) $ is exponentially stable. Hence the
output $y\left( t\right) $ of the system $(4)$ is exponentially
stabilizable.

\end{proof}

\section{Examples}
\begin{Example}
By choosing

\begin{equation}
b(\xi )=\chi _{\left[ p_{1},q_{1}\right] }\left( \xi \right) ,\ c(\xi )=\chi
_{\left[ p_{2},q_{2}\right] }\left( \xi \right),
\end{equation}

where$\ {\small \chi }_{\left[ \text{ }a,\text{ }b\right] }${\small \ }%
denotes the characteristic function of the interval$\ \left[ {\small a,}%
\text{ }{\small b}\right]$. Straightforward calculations show that

\begin{equation}
\left\{
\begin{array}{l}
{ b}_{n}\text{ }{ =}\frac{{ -2}\sqrt{2}{ \alpha }}{%
{ \alpha }^{2}{ +4n}^{2}{ \pi }^{2}}{ [}e^{\frac{%
-\alpha q_{1}}{2}}{ A}_{n,{ q}_{1}}{ -}\text{ }{ e^{%
\frac{-\alpha p_{1}}{2}}\ A}_{n,p_{1}}{ ]} \\
{ c}_{n}\text{ }{ =}\frac{{ -2}\sqrt{2}{\alpha }}{%
{ \alpha }^{2}{ +4n}^{2}{ \pi }^{2}}{ [e^{\frac{%
-\alpha q_{2}}{2}}A}_{n,{ q}_{2}}{ -}\text{ }{ e^{\frac{%
-\alpha p_{2}}{2}}\ A}_{n,p_{2}}{ ]}%
\end{array}%
\right.
\end{equation}

 where $c_{n}=\left\langle c,\phi _{n}\right\rangle $ , $%
b_{n}=\left\langle b,\phi _{n}\right\rangle $, $n\in \mathbb{N}$

$A_{n,m}=(\sin (n\pi m)+(2n\pi /\alpha )\cos (n\pi m))$.

Take $p_{1}=p_{2}=1/4, q_{1}=1/2$ and $q_{2}=3/4$. Since $(A,b)$ is controllable it is clear that the
output of the system $(4)$ is exponentially stabilizable.
\end{Example}

\begin{Example}
In this example we take $\alpha=0$ and
\begin{equation}
b(\xi )=\chi _{\left[\frac{1}{4},\frac{3}{4}\right] }\left( \xi \right) ,\ c(\xi )=\chi
_{\left[\frac{1}{4},\frac{1}{2}\right] }\left( \xi \right).
\end{equation}
Elementary calculations show then that
\begin{align}
b_{_{n}}=\langle b(\xi),\phi_{_{n}}(\xi)\rangle
_{L_{2}(\frac{1}{4}\text{ },\frac{3}{4})}\ =-\frac{2\sqrt{2}}{n\pi}\sin\left[  \frac{n\pi}{2}\right]
\sin\left[  \frac{n\pi}{4}\right]  ,\text{ }\\
 c_{_{n}}=\langle c(\xi),\phi_{_{n}}(\xi)\rangle_{L_{2}(\frac{1}{4}\text{
},\frac{1}{2})}=-\frac{2\sqrt{2}}{n\pi}\sin\left[  \frac{n\pi}{8}\right]
\sin\left[  \frac{3n\pi}{8}\right].
\end{align}

A simple calculation show that the index set $K$ takes the form%
\begin{equation}
K=~\left\{  8p+2,\text{ }8p+4,\text{ }8p+6;\text{ }p\in\mathbb{N}\right\}.
\end{equation}

Thus concerning proposition 3.4, we have that for $
k={\pi}^{2}$ the stabilizability of the output $\ {\small y(t)}=\langle{\small c(\xi)},$${\small z}\rangle_{L^{2}(0,1)}$
is achieved.

\end{Example}

\end{document}